\def\Q{{\mathbb Q}}
\def\Z{{\mathbb Z}}
\def\Mc{{\mathcal M}}
\def\Oc{{\mathcal O}}
\def\Pc{{\mathcal P}}
\def\ago{{\mathfrak a}}
\def\mgo{{\mathfrak m}}
\def\eps{\varepsilon}
\def\ov{\overline}
\newcommand{\cartesien}{\ar@{}[dr]|{\square}}
\newcommand{\double}{\ar@<2pt>[r] \ar@<-2pt>[r]}
\def\Com{\hbox{\rm Com}\,}
\DeclareMathOperator{\edim}{edim}
\DeclareMathOperator{\Frob}{Frob}
\def\Gal{\hbox{\rm Gal}\,}
\def\GL{\hbox{\rm GL}\,}
\def\Ker{\hbox{\rm Ker}\,}
\DeclareMathOperator{\Spec}{Spec}
\DeclareMathOperator{\Supp}{Supp}
\newcommand{\details}[1]{}
\newtheorem{cor}[subsection]{Corollary}
\newtheorem{prop}[subsection]{Proposition}
\newtheorem{thm}[subsection]{Theorem}
\newtheorem{defi}[subsection]{Definition}
\newtheorem{lem}[subsection]{Lemma}
\newtheorem{remarque}[subsection]{Remark}
\title{Proof of De Smit's conjecture: a freeness criterion}
\date{\today}
\def\vide{\varnothing}
\def\lto{%
\xymatrix{\ar[r] &}}
\begin{document}

\author{Sylvain Brochard}
\email{sylvain.brochard@umontpellier.fr}
\address{Institut Montpelliérain Alexander Grothendieck, CNRS, Univ. Montpellier
}

\classification{13C10, 11F80, 14A05}
\keywords{flatness, Artin local rings, embedding dimension, patching}

  \begin{abstract}
Let $A\to B$ be a morphism of Artin local 
rings with the same embedding dimension. We prove that any $A$-flat 
$B$-module is $B$-flat. This freeness criterion was conjectured by de Smit in 
1997 and improves Diamond's criterion~\cite[Theorem 
2.1]{Diamond_The_Taylor_Wiles}. We also prove that if there is a nonzero 
$A$-flat $B$-module, then $A\to B$ is flat and is a relative complete 
intersection. Then we explain how this result allows one to simplify Wiles's 
proof 
of Fermat's Last Theorem: we do not need the so-called ``Taylor-Wiles systems'' 
any more.
  \end{abstract}
  
  \maketitle

\subsection*{Notations.} If $A$ is a local ring, we 
denote by $\mgo_A$ its maximal ideal and by $\kappa(A)$ its 
residue field. The embedding dimension of $A$, i.e. the minimal number of 
generators of $\mgo_A$, is denoted by $\edim(A)$. A local morphism $A\to B$ of 
Noetherian local rings is a relative complete intersection if the 
ring~$B/\mgo_A B$ is a complete intersection.

\section{Introduction}

A crucial step in the proof of the modularity of semistable elliptic curves 
over~$\Q$ by Taylor and Wiles is to show that certain Hecke algebras are 
complete intersections.
Their method, now commonly referred-to as a ``patching method'', 
heuristically consists of considering the limit of algebras arising from forms 
of different levels.
In their construction, they used in particular the following (fundamental) 
``multiplicity one'' result: the homology of the modular curve is a free module 
over the Hecke algebra (after localization at some maximal ideal).
Diamond reversed the argument in~\cite{Diamond_The_Taylor_Wiles}: ``multiplicity 
one'' becomes a byproduct of the proof rather than an ingredient.
The key input from commutative algebra that allowed Diamond's improvement was 
the freeness criterion~\cite[Thm 2.1]{Diamond_The_Taylor_Wiles}. Its proof 
still relies on a patching argument.

In the hypotheses of Diamond's freeness criterion, there is a condition that 
has to hold for all positive integers $n$. The condition for $n$ implies the 
condition for all smaller integers.
In \cite[Remark 2.2]{Diamond_The_Taylor_Wiles}, Diamond asks whether a 
sufficiently large~$n$ would be enough, and whether there could exist a lower 
bound for such an $n$ depending only on the number $r$ of variables and the 
rank of the module. Around 1997, de Smit conjectured 
that if $A\to B$ is a morphism of Artin local rings with the same embedding 
dimension, then any $A$-flat $B$-module is $B$-flat 
(see Corollary~\ref{cor:desmit} below). In~\cite{Brochard_Mezard_ConjDeSmit} we 
proved de Smit's conjecture when 
the embedding dimension $r$ is $\leq 2$, and with the additional assumption 
that $A\to B$ was flat. Here we prove the following theorem, which in 
particular proves the full conjecture, and
answers Diamond's question: $n=2$ already suffices, for any $r$ and any module. 
So, the freeness criterion will be much easier to apply.

\begin{thm}
    \label{thm_desmit}
Let $\varphi : A\to B$ be a local morphism of Noetherian local rings. Assume 
the following:
\begin{enumerate}[label=(\roman*)]
\item $\edim(B)\leq \edim(A)$
\item There is a nonzero $A$-flat $B$-module $M$.
\item $A$ and $B$ are Artin or $M$ is of finite type as a $B$-module.
\end{enumerate}
Then:
\begin{enumerate}[label=(\arabic*)]
\item The morphism $A\to B$ is flat and $\edim(B)=\edim(A)$.
\item The ring $B/\mgo_AB$ is a complete intersection of dimension zero.
\item $M$ is flat over $B$.
\end{enumerate}
\end{thm}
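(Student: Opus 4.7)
The plan is to reduce to the case where both $A$ and $B$ are Artin, prove that $M$ is $B$-flat (statement~(3)) via the local criterion of flatness modulo $\mgo_A B$, and then deduce (1) and (2) from (3) together with a length or Hilbert-series computation. For the reduction, when $M$ is finitely generated over $B$, I would apply the Artin case to the quotients $(A_n,B_n,M_n) := (A/\mgo_A^n,\, B/\mgo_A^n B,\, M/\mgo_A^n M)$ for each $n\geq 2$: the embedding dimensions are preserved, $M_n$ is $A_n$-flat by flat base change, and $M_n$ is nonzero by Nakayama. The Artin case would provide flatness of $M_n$ over $B_n$ and of $A_n\to B_n$ for every $n$, from which the local criterion of flatness and a passage to the limit recover the conclusions in the original Noetherian setting.

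In the Artin case, $M$ is $A$-flat, hence $A$-free. Set $\bar B := B/\mgo_A B$. The local flatness criterion applied to the ideal $\mgo_A B\subseteq B$ shows that (3) is equivalent to the combination of $\Tor_1^B(\bar B, M)=0$ and ``$M/\mgo_A M$ is $\bar B$-flat''. The first condition is the natural meeting point with the $A$-flatness of $M$; the second concerns a module over the Artin local ring $\bar B$, where flat coincides with free. Once (3) is in place, $M$ is $B$-free; since $M$ is nonzero, $B$ appears as a direct summand of $M$ as a $B$-module and hence as an $A$-module, so $B$ inherits $A$-flatness from $M$, yielding the first half of (1). (In the non-Artin, finitely generated case, the same summand argument runs after the reduction.)

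The heart of the argument, and the main obstacle, is to leverage the inequality $\edim(B)\leq\edim(A)$ to simultaneously kill $\Tor_1^B(\bar B,M)$ and the obstructions to $\bar B$-freeness of $M/\mgo_A M$. My approach would be to choose a minimal system of generators $y_1,\dots,y_r$ of $\mgo_A$ (so $r=\edim(A)$) and analyze the Koszul complexes $K_\bullet(\mathbf{y};A)$ and $K_\bullet(\mathbf{y};B)$ after tensoring with~$M$. The $A$-flatness of $M$ gives the identity $H_i(\mathbf{y};M) = H_i(\mathbf{y};A)\otimes_A M$, while the bound $\edim(B)\leq r$ means that after passing to $B$, the images of the $y_i$ still generate $\mgo_B$ modulo $\mgo_B^2$ but with extra $\kappa(B)$-linear relations that enlarge the $B$-Koszul homology. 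A length identity, forced by $M$ being $A$-free together with the multiplicativity of lengths through the tower $A\to B$ (via $\mathrm{length}_A(M) = [\kappa(B):\kappa(A)]\cdot\mathrm{length}_B(M)$), must account for both sets of Koszul homologies; equality can hold only if those extra relations lift to a regular sequence in a regular cover of~$\bar B$ (forcing the complete-intersection property (2)) and the minimal $B$-free resolution of $M$ terminates in degree zero (forcing~(3)). The same length equality also pins down $\edim(B)=\edim(A)$, completing the proof.
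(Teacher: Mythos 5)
Your outer scaffolding matches the paper's: the reduction of (3) to the flatness of $M/\mgo_A M$ over $\bar B=B/\mgo_AB$ via the local criterion for the nilpotent ideal $\mgo_AB$ is exactly \cite[2.3]{Brochard_Mezard_ConjDeSmit} as used in the paper, and your derivation of (1) from (3) by faithful flatness of a nonzero free module is the paper's argument verbatim. But the heart of the theorem --- why $\edim(B)\leq\edim(A)$ together with $A$-flatness of $M$ forces $M/\mgo_AM$ to be free over $\bar B$ and forces $\bar B$ to be a zero-dimensional complete intersection --- is missing. Your proposed mechanism cannot work as stated. The length identity $\mathrm{length}_A(M)=[\kappa(B):\kappa(A)]\cdot\mathrm{length}_B(M)$ holds for \emph{every} finite-length $B$-module regardless of any flatness, so it cannot detect anything. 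And the two Koszul homologies you want to play off against each other are literally the same: $K_\bullet(\mathbf{y};B)\otimes_BM=K_\bullet(\mathbf{y};A)\otimes_AM$ as complexes, since $\mathbf{y}$ acts on $M$ through $B$. The complex relevant to $B$-flatness is the Koszul complex on a generating system $u_1,\dots,u_n$ of $\mgo_B$, and the entire difficulty is to transfer information from the sequence $\mathbf{x}=\varphi(\mathbf{y})$ (which generates only $\mgo_AB$) to the sequence $\mathbf{u}$. The paper does this with a genuinely nontrivial determinantal identity (Lemma~\ref{lem:wtf}): writing $x=Wu$ for a square matrix $W$ (this is where $\edim(B)\leq\edim(A)$ enters), an inductive computation with the minors of $W$ shows that $\Delta m\in\mgo_ABM$ implies $m\in\mgo_BM$, where $\Delta=\det W$. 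From this one gets $\Delta\notin\mgo_AB$, whence $\ov{W}$ is a Wiebe matrix for $\bar B$ and \cite[2.7]{Simon_Strooker_Wiebe} yields that $\bar B$ is a complete intersection with socle generated by $\ov\Delta$; weak torsion-freeness of $M/\mgo_AM$ then follows from the socle statement plus the lemma again, and flatness follows from Proposition~\ref{prop:wtf_equiv_plat}. Nothing in your proposal substitutes for this step; ``equality can hold only if those extra relations lift to a regular sequence'' is the conclusion you need, not an argument for it. (This problem was open for twenty years precisely because naive Koszul/length counting does not close.)

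A secondary gap: your reduction to the case where both rings are Artin by passing to $B/\mgo_A^nB$ presupposes that $B/\mgo_AB$ is Artinian, which is not known in advance --- it is part of conclusion (2). In the finite-type case one must run the argument directly on the Noetherian rings (as the paper does; Lemma~\ref{lem:wtf} and the cited flatness criteria impose no Artin hypothesis) or first establish that the closed fiber is zero-dimensional, which again requires the missing core argument.
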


\begin{cor}[(de Smit's conjecture)]
 \label{cor:desmit}
Let $A\to B$ be a morphism of Artin local rings with the same embedding 
dimension. Then any $A$-flat $B$-module is $B$-flat.
\end{cor}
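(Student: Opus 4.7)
The plan is essentially formal: the corollary is an immediate consequence of Theorem~\ref{thm_desmit}, and I would simply verify that the hypotheses of the theorem are met in the situation of the corollary. The main obstacle is not in this deduction but in the theorem itself; the corollary is a specialization.

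In detail, let $M$ be an $A$-flat $B$-module. If $M = 0$, then $M$ is $B$-flat trivially, so I may assume $M \ne 0$. I now apply Theorem~\ref{thm_desmit} to the given morphism $\varphi \colon A \to B$. Hypothesis (i) of the theorem, $\edim(B) \le \edim(A)$, holds because by assumption we have the stronger equality $\edim(A) = \edim(B)$. Hypothesis (ii), the existence of a nonzero $A$-flat $B$-module, is satisfied by $M$ itself. Hypothesis (iii) requires either that $A$ and $B$ be Artin or that $M$ be of finite type as a $B$-module; the first disjunct holds by assumption, so this is satisfied as well. Conclusion (3) of the theorem then states exactly that $M$ is flat over $B$, which is what the corollary asserts.

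It is worth emphasizing that the theorem in fact yields strictly more than the conjecture: along the way one also obtains that $\varphi$ is flat and that $B/\mgo_A B$ is a zero-dimensional complete intersection. Moreover the theorem only requires the inequality $\edim(B) \le \edim(A)$ rather than equality, and equality is then recovered as a consequence. So the only real work is in the theorem; the deduction of the corollary is, as sketched, a one-line verification of hypotheses.
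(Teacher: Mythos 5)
Your proposal is correct and matches the paper exactly: the corollary is stated with no separate proof precisely because it is the immediate specialization of Theorem~\ref{thm_desmit} that you describe, with the $M=0$ case trivial and all three hypotheses verified as you do. The only point worth a passing mention is that the theorem assumes $\varphi$ is a \emph{local} morphism; in the Artin setting this is automatic once a nonzero $A$-flat $B$-module exists (as the paper's remark after the proof explains, via nilpotence of $\mgo_A$), so nothing is lost.
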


\begin{cor}[(see Theorem~\ref{thm:global_version})] 
Let $f : X\to S$ be a morphism of locally Noetherian schemes such that for 
any $x\in X$, the tangent spaces satisfy
\(
 \dim T_x \leq \dim T_{f(x)}
\). Then any $S$-flat coherent $\Oc_X$-module is also flat over $X$.
\end{cor}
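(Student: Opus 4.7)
The plan is to reduce the global statement to Theorem~\ref{thm_desmit} applied stalkwise. Flatness of a sheaf of modules is a stalk-local property, so it suffices to show, for every $x\in X$, that the stalk $\Fc_x$ of an arbitrary $S$-flat coherent $\Oc_X$-module $\Fc$ is flat over $\Oc_{X,x}$.

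Fix $x\in X$, set $s=f(x)$, and put $A:=\Oc_{S,s}$, $B:=\Oc_{X,x}$, $M:=\Fc_x$. Since $X$ and $S$ are locally Noetherian, $A$ and $B$ are Noetherian local rings and the homomorphism $A\to B$ induced by $f$ is local. I would then translate the three hypotheses of Theorem~\ref{thm_desmit} as follows. By definition $T_x$ is the $\kappa(x)$-linear dual of $\mgo_B/\mgo_B^2$, so $\dim T_x=\edim(B)$, and likewise $\dim T_s=\edim(A)$; the tangent-space assumption therefore becomes $\edim(B)\leq\edim(A)$, which is~(i). The $S$-flatness of $\Fc$ is exactly the statement that $M$ is $A$-flat, and coherence of $\Fc$ on the locally Noetherian scheme $X$ makes $M$ finitely generated over $B$, which gives~(iii).

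If $\Fc_x=0$ the desired flatness is trivial; otherwise $M\neq 0$, so hypothesis~(ii) is also satisfied, and conclusion~(3) of Theorem~\ref{thm_desmit} immediately yields flatness of $M=\Fc_x$ over $B=\Oc_{X,x}$. There is essentially no obstacle beyond this translation: the argument is a direct reduction of the scheme-theoretic statement to the already-established algebraic theorem, the only point needing attention being the standard identification of the tangent-space dimension at a scheme point with the embedding dimension of the corresponding local ring.
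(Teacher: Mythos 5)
Your proof is correct and matches the paper's own argument for part~(1) of Theorem~\ref{thm:global_version}: both reduce stalkwise, identify $\dim T_x$ with $\edim(\Oc_{X,x})$ and $\dim T_{f(x)}$ with $\edim(\Oc_{S,f(x)})$, and invoke Theorem~\ref{thm_desmit} with $A=\Oc_{S,f(x)}$, $B=\Oc_{X,x}$, $M=\Fc_x$. The only cosmetic difference is that the paper explicitly cites Bourbaki to justify that $S$-flatness of $\Fc$ gives $\Oc_{S,f(x)}$-flatness of the stalk $\Fc_x$, whereas you take this standard reformulation as the definition; either way the reduction is the same.
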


Since its publication, Diamond's freeness criterion and the related patching 
method have been used quite a lot. For instance, here is a list of papers 
(nonexhaustive, in chronological order) that directly use Diamond's Theorem 
2.1:
\cite{Conrad_Diamond_Taylor},
 \cite{Dickinson_On_The_Modularity},
 \cite{Genestier_Tilouine_Systemes},
 \cite{Bockle_Khare_Modl},
 \cite{Taylor_On_The_Meromorphic},
\cite{Clozel_Harris_Taylor},
 \cite{Dimitrov_On_Iharas_Lemma},
 \cite{Pilloni_Modularite_formes_de},
 \cite{Harris_The_Taylor_Wiles},
\cite{Breuil_Diamond_Formes}.
A variant of this freeness criterion, due to Kisin 
(see \cite[(3.3.1)]{Kisin_Moduli_Of_Finite}), was also used in several major 
results, for instance 
the proof of Serre's modularity conjecture 
(see \cite{Kisin_Moduli_Of_Finite},
\cite{Khare_Wintenberger_Serre2} and
 \cite{Kisin_Modularity_Of_2adic}) 
or the Fontaine--Mazur conjecture for $GL_2$ 
(see \cite{Kisin_The_Fontaine_Mazur}). A more functorial approach of the 
patching method is proposed in~\cite{Emerton_Gee_Savitt}.

The paper is organized as follows: in section~\ref{section:preuve_thm_desmit} 
we prove Theorem~\ref{thm_desmit}. We also give a global version for 
schemes that are not necessarily local (see Theorem~\ref{thm:global_version}). 
In 
section~\ref{section:preuve_flt} we briefly explain how this new freeness 
criterion allows one to simplify 
Wiles's proof of Fermat's Last Theorem and avoid the use of the patching 
method.

  \section{Proof of Theorem \ref{thm_desmit}}
\label{section:preuve_thm_desmit}
 
The following lemma is the heart of the proof.

\begin{lem}
 \label{lem:wtf}
Let $B$ be a ring. Let $(x_1, \dots, x_n)$ and 
$(u_1, \dots, u_n)$ be sequences of elements of $B$ and let us denote by $J_x$ 
and $J_u$ the ideals they generate. Let $M$ be a $B$-module. Assume that:
\begin{enumerate}
 \item We have the inclusion of ideals $J_x \subset J_u$, in other words, if 
$x$ and $u$ denote 
the column matrices $(x_1, \dots, x_n)^T$ and $(u_1, \dots, u_n)^T$, there is a 
square $n\times n$ matrix $W=(c_{ij})$ with entries in $B$ such 
that $x=Wu$. Let $\Delta=\det W$.
\item For any $m_1, \dots, m_n\in M$, if we have the relation
$ \sum_{i=1}^n x_im_i=0$, then $m_i\in J_xM$ for all $i$.
\end{enumerate}
Then, for any $m\in M$, if $\Delta m\in J_x M$, 
then $m\in J_uM$.
\end{lem}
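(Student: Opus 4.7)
The plan is to exploit the classical adjugate identity $VW = \Delta\, I_n$, where $V := \operatorname{adj}(W)$. Read column-wise this identity says $\Delta u_k = \sum_j V_{kj} x_j$ for every $k$, so each $\Delta u_k$ already lies in $J_x$.

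Starting from a decomposition $\Delta m = \sum_i x_i m_i$ (which exists by assumption), I would compute $u_k \Delta m$ in two ways: as $\sum_i x_i (u_k m_i)$ and as $(\sum_j V_{kj} x_j)\, m = \sum_j x_j (V_{kj} m)$. Their difference produces the syzygy
\[
\sum_i x_i\, \bigl( V_{ki} m - u_k m_i \bigr) = 0,
\]
to which hypothesis~(2) applies: for every pair $(k,i)$,
\[
V_{ki} m - u_k m_i \in J_x M.
\]
This already settles the case $n = 1$, since $V_{11} = 1$ gives $m - u_1 m_1 \in J_x M \subset J_u M$.

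For $n \geq 2$, the next step is to combine the preceding relations via the trivial Koszul syzygies $u_r e_s - u_s e_r$ of the sequence $(u_1, \dots, u_n)$: multiplying the $(k = s)$-relation by $u_r$, subtracting $u_s$ times the $(k = r)$-relation, and using $u_r u_s = u_s u_r$ makes all $m_i$-contributions cancel, producing
\[
(u_r V_{si} - u_s V_{ri})\, m \;\in\; J_u \cdot J_x M
\]
for all $r \neq s$ and every $i$. For $n = 2$ this is already decisive: a direct computation gives $u_1 V_{22} - u_2 V_{12} = u_1 c_{11} + u_2 c_{12} = x_1$, so $x_1 m \in J_x J_u M$; writing $x_1 m = \sum_j x_j p_j$ with $p_j \in J_u M$ produces the syzygy $\sum_j x_j(\delta_{1j} m - p_j) = 0$, and a second application of hypothesis~(2) yields $m - p_1 \in J_x M \subset J_u M$, hence $m \in J_u M$.

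The main obstacle is extending this to $n \geq 3$: there the combinations $u_r V_{si} - u_s V_{ri}$ are no longer single $x_\ell$'s but $B$-linear combinations of several $x_j$'s whose coefficients are $2 \times 2$ minors of $W$. I expect the proof to proceed by induction on $n$, descending from the top-level minor $\Delta$ through minors of $W$ of decreasing size, each descent step using a Koszul-type combination as above together with hypothesis~(2) to transfer the condition on a $k$-minor of $W$ times $m$ to conditions on $(k{-}1)$-minors of $W$ times $m$, and eventually (at size $0$, whose only minor is $1$) to $m$ itself. Organising this descent so that the Koszul-in-$u$ combinations combine cleanly with hypothesis~(2) on the full length-$n$ sequence is the delicate point, and accounts for why the restricted case $n \leq 2$ treated in~\cite{Brochard_Mezard_ConjDeSmit} is genuinely easier than the general statement.
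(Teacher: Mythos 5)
Your overall strategy is the right one, and it is in fact the same strategy the paper uses: descend from $\Delta$ through minors of $W$ of decreasing size, using Koszul-type antisymmetric combinations in the $u_i$'s to cancel the auxiliary terms and hypothesis~(2) to convert each resulting syzygy in the $x_j$'s into a congruence modulo $J_xM$ one level down. Your computations for $n=1$ and $n=2$ are correct (the identity $u_1V_{22}-u_2V_{12}=x_1$ and the second application of hypothesis~(2) do close the case $n=2$). But the statement you actually need to prove is for arbitrary $n$, and you explicitly leave the case $n\geq 3$ as an expectation rather than an argument. That is a genuine gap, and it is precisely where the difficulty of the lemma lies: for $n\geq 3$ the combinations $u_rV_{si}-u_sV_{ri}$ are $B$-linear combinations of several $x_j$'s with $2\times 2$-minor coefficients, and one must organise the bookkeeping so that after extracting the coefficient of a single $x_j$ via hypothesis~(2) one lands on a statement of the same shape one level down.

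The paper resolves this by formulating the correct inductive statement explicitly. For $0\leq\ell\leq n$ it proves a statement $(A_\ell)$: there exist elements $a_I^{\ell}\in M$ indexed by subsets $I\subset\{1,\dots,n\}$ of cardinality $\ell-1$ such that for every $I$ of cardinality $\ell$ the element $\Delta_I^{E_\ell}m+\sum_{i\in I}\eps(i,I)u_i a_{I-i}^{\ell}$ lies in $J_xM$, where $\Delta_I^{E_\ell}$ is the minor of $W$ obtained by deleting the first $\ell$ rows and the columns indexed by $I$, and $\eps(i,I)$ is a sign chosen so that the antisymmetry $\eps(i,I)\eps(j,I-i)=-\eps(j,I)\eps(i,I-j)$ holds. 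The step $(A_\ell)\Rightarrow(A_{\ell+1})$ computes $\sum_{i\in I}\eps(i,I)u_ig_{I-i}$ in two ways: the antisymmetry kills the auxiliary terms, and the cofactor expansion of $\Delta_{I-i}^{E_\ell}$ along column $i$ combined with $x=Wu$ (and the vanishing of expansions with a repeated column) rewrites the $u$-combination as $\pm\sum_{k>\ell}(-1)^k\Delta_I^{E_\ell\cup\{k\}}x_km$; hypothesis~(2) applied to the coefficient of $x_{\ell+1}$ then yields $(A_{\ell+1})$. Since $(A_0)$ is the hypothesis $\Delta m\in J_xM$ and $(A_n)$ reads $m+\sum_i\eps(i,E_n)u_ia_{E_n-i}^n\in J_xM\subset J_uM$, the lemma follows. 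Note two organisational differences from your sketch: the paper deletes rows of $W$ in a fixed order (only the first $\ell$ rows at stage $\ell$) rather than using all entries of the adjugate, and at each stage it invokes hypothesis~(2) only for the coefficient of the single variable $x_{\ell+1}$. If you want to complete your proof along your own lines, you will need to supply an inductive statement of comparable precision; identifying it is the essential content of the lemma.
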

\begin{proof}
We will need some notations for the minors of $W$. 
For any
  integer $\ell$, let~$E_{\ell}$ be the set~$\{1, \dots, 
  \ell\}$ and let $\Pc_{\ell}$ be the set of subsets 
  $I\subset E_n$ such that $|I|={\ell}$. For~$I, J\in 
  \Pc_{\ell}$, let~$\Delta^I_J$ denote the minor of $W$ 
  obtained by deleting the rows (respectively the columns) whose 
  index is in $I$ (respectively~$J$). In other words, 
  $\Delta^I_J=\det (c_{ij})_{\substack{i\in 
      E_n\smallsetminus I\\ j\in E_n\smallsetminus J}}$. In 
  particular, $\Delta_{\vide}^{\vide}=\Delta$ and $\Delta_{E_n}^{E_n}=1$. The 
comatrix is $\Com 
(W)=((-1)^{i+j}\Delta^i_j)$.
  If $I\subset E_n$ and $i\in E_n$, we write $I-i$ for $I\smallsetminus \{i\}$. 
  For $i\neq j\in E_n$, let~$\ov{\eps}(i,j)=1$ if $i<j$ and
  $\ov{\eps}(i,j)=-1$ if $i>j$. For $I\subset E_n$ and
  $i\in I$, let $\eps(i,I)=(-1)^p$,
where~$p$ is the position of $i$ in the  ordered set $E_n\smallsetminus (I-i)$. 
Note that $\eps(i,I)=(-1)^i\prod_{j\in I-i} \ov{\eps}(i,j)$. For any~$i,j\in I$ 
with $i\neq j$, we have the relation
  \begin{equation}
    \label{eq_signes}
    \eps(i,I)\eps(i,I-j)=\ov{\eps}(i,j)
  \end{equation}
Now, for any $0\leq \ell\leq n-1$, if $I\in \Pc_{\ell+1}$ and $i\in I$, the 
expansion of the minor
  $\Delta^{E_{\ell}}_{I-i}$ along the column $i$ is
  \begin{equation}
    \label{eq_dev_mineur}    
   \Delta^{E_{\ell}}_{I-i}=(-1)^{\ell}\eps(i,I)\sum_{k=\ell+1}^n
(-1)^k c_{ki}\Delta_I^{E_{\ell}\cup \{k\}}
\end{equation}
If $i\notin I$, pick an element $j\in I$ and 
consider the expansion along the column $j$ of the 
minor~$\Delta^{E_{\ell}}_{I-j}$ in which we have replaced the 
column $j$ by the column~$i$. Since the latter minor has two identical columns, 
it is zero and we see that for any $i\notin I$,
\begin{equation}
 \label{eq_dev_nul}
  \sum_{k=\ell+1}^n
(-1)^k c_{ki}\Delta_{I}^{E_{\ell}\cup \{k\}}=0
\end{equation}

Now let us come back to our module $M$. Let $m\in M$ be such that $\Delta m\in 
J_x M$. For $0\le {\ell}\le n$, let $(A_{\ell})$ denote the 
following statement.

$(A_{\ell})$: There is a family $(a_I^{{\ell}})_{I\in 
\Pc_{{\ell}-1}}$ of elements of $M$ such that for any $I\in 
\Pc_{\ell}$, the element
\[
g_I=\Delta_I^{E_{\ell}}m
+ \sum_{i\in I} \eps(i,I)u_ia_{I-i}^{{\ell}}
\]
belongs to $J_xM$.

We will prove that $(A_{\ell})$ holds for any $0\le {\ell}\le n$ by 
induction on $\ell$. The statement $(A_0)$ means 
that $\Delta m\in J_x M$ and holds by assumption. Assume that 
$(A_{\ell})$ holds for some $0\leq {\ell}\le n-1$ and let us prove 
the statement $(A_{\ell+1})$.
For any $I\in \Pc_{\ell}$, since $g_I\in J_xM$, 
we can write $g_I=\sum_{k\in E_n}x_k a_I^k$ for some 
elements $a_I^k\in M$.
 Let $I\in 
\Pc_{\ell+1}$. We will compute 
$m'=\sum_{i\in I}\eps(i,I)
u_ig_{I-i}$. For this, we first compute
\begin{align*}
  \sum_{i\in I}\eps(i,I) u_i(g_{I-i}-\Delta_{I-i}^{E_{\ell}}m)
  &= \sum_{i\in I}\eps(i,I) u_i
  \sum_{j\in I-i} 
  \eps(j,I-i)u_j
  a_{I\smallsetminus\{i,j\}}^{{\ell}} 
  \\
  &= \sum_{\substack{i,j\in I \\i\neq j}}
  u_iu_ja_{I\smallsetminus\{i,j\}}^{{\ell}}
  S(i,j)
\end{align*}
where $S(i,j)=\eps(i,I)\eps(j,I-i)$. 
Now the sum vanishes because $S(i,j)=-S(j,i)$. Indeed,
$S(i,j)S(j,i)=\ov{\eps}(i,j) \ov{\eps}(j,i)$ by 
equation~\eqref{eq_signes} above.
Hence,
\begin{equation*}
 m'=\sum_{i\in I}\eps(i,I) 
u_i\Delta_{I-i}^{E_{\ell}}m
\end{equation*}
We expand $\Delta_{I-i}^{E_{\ell}}$ 
along the column $i$:
\begin{align*}
   m' &=\sum_{i\in I}\eps(i,I)u_i(-1)^{\ell}\eps(i,I)
\sum_{k=\ell+1}^n (-1)^kc_{ki} \Delta_I^{E_{\ell}\cup \{k\}}m \\
 &= (-1)^{\ell} \sum_{k=\ell+1}^n (-1)^{k}
\Delta_{I}^{E_{\ell}\cup \{k\}}
\left(\sum_{i\in I}c_{ki}u_i\right)m\\
  &=(-1)^{\ell}\sum_{k=\ell+1}^n 
(-1)^{k}\Delta_{I}^{E_{\ell}\cup \{k\}}
\left(x_k-\sum_{i\notin I}c_{ki}u_i \right)m
 \quad \quad \textrm{(because  $x=Wu$)} \\
&=(-1)^{\ell}\sum_{k=\ell+1}^n 
(-1)^{k}\Delta_{I}^{E_{\ell}\cup \{k\}}x_km \qquad \qquad
\textrm{(using \eqref{eq_dev_nul} for each $i\notin I$)}
\end{align*}
On the other hand, by definition
\begin{equation*}
 m'= \sum_{i\in I}\eps(i,I)u_ig_{I-i}
= \sum_{i\in I}\eps(i,I)u_i \sum_{k\in E_n}x_k a_{I-i}^k
\end{equation*}
Hence,
\[
\sum_{i\in I}\eps(i,I)u_i \sum_{k\in E_n}x_k a_{I-i}^k
- (-1)^{\ell}\sum_{k=\ell+1}^n 
(-1)^{k}\Delta_{I}^{E_{\ell}\cup \{k\}}x_km =0\, .
\]
By our assumption (ii), the coefficient of $x_{\ell+1}$ 
in the above equation belongs to $J_x M$. But this coefficient is
\[
\sum_{i\in I}\eps(i,I)u_i a_{I-i}^{\ell+1}
+\Delta_{I}^{E_{\ell+1}}m\, .
\]
This proves the statement $(A_{\ell+1})$.
In particular, $(A_n)$ holds and hence 
there are elements $a_I^{n}\in M$ for $I\in 
\Pc_{n-1}$  such that the element
\[
g_{E_n}=\Delta_{E_n}^{E_n}m
+ \sum_{i\in E_n} \eps(i,E_n)u_ia_{E_n-i}^n
\]
belongs to $J_xM$. Since $\Delta_{E_n}^{E_n}=1$,
this proves that $m\in J_uM$.
\end{proof}

Before proceeding to the proof of Theorem~\ref{thm_desmit}, let us recall a 
result 
from~\cite{Brochard_Mezard_ConjDeSmit}. It will provide a useful 
characterization for the flatness of $M$ over~$B$.

\begin{defi}[{\cite[3.1]{Brochard_Mezard_ConjDeSmit}}]
 Let $R$ be a local ring and $M$ be an $R$-module. We say that $M$ is weakly 
torsion-free if, for every $\lambda\in R$ and every $m\in M$, the relation 
$\lambda m=0$ implies that $\lambda=0$ or~$m\in \mgo_R M$.
\end{defi}

\begin{prop}[{\cite[3.7]{Brochard_Mezard_ConjDeSmit}}]
\label{prop:wtf_equiv_plat}
 Let $R$ be an Artin local ring. Assume that $R$ is Gorenstein and contains a 
field. Then an $R$-module $M$ is flat over $R$ if and only if it is weakly 
torsion-free.
\end{prop}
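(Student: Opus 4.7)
For the easy direction, suppose $M$ is flat and $\lambda m = 0$ for some $\lambda \in R$ and $m \in M$. The equational criterion of flatness produces $\mu_1,\dots,\mu_s \in R$ and $n_1,\dots,n_s \in M$ with $\lambda \mu_i = 0$ for every $i$ and $m = \sum_i \mu_i n_i$. If $\lambda \neq 0$, no $\mu_i$ can be a unit, so each $\mu_i \in \mgo_R$ and $m \in \mgo_R M$, which is weak torsion-freeness.

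For the reverse direction, I would use that a flat module over an Artin local ring is free (Bass and Kaplansky), and that since $\mgo_R$ is nilpotent, lifting any $\kappa$-basis of $M/\mgo_R M$ to elements of $M$ yields a generating family of $M$. Thus $M$ is free iff the resulting surjection $R^{(I)} \twoheadrightarrow M$ has vanishing kernel, iff $\mathrm{Tor}_1^R(\kappa, M) = 0$, iff the natural map $\mgo_R \otimes_R M \to M$ is injective. The task is therefore to derive this injectivity from weak torsion-freeness.

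This is the heart of the argument and is where the Gorenstein-plus-field hypothesis intervenes. By Cohen's structure theorem there is a coefficient field $\kappa \hookrightarrow R$, so $R$ is a finite-dimensional Frobenius $\kappa$-algebra: $R \cong \mathrm{Hom}_\kappa(R, \kappa)$ as $R$-modules, the socle $\mathrm{soc}(R)$ is one-dimensional over $\kappa$, and the double-annihilator identity $\mathrm{Ann}_R(\mathrm{Ann}_R(I)) = I$ holds for every ideal $I$. Starting from an alleged element of $\ker(\mgo_R \otimes_R M \to M)$, I would apply weak torsion-freeness iteratively along the socle filtration $0 \subset \mathrm{soc}^1(R) \subset \mathrm{soc}^2(R) \subset \cdots$ (dual under the Frobenius pairing to the $\mgo_R$-adic filtration). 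At each step, the double-annihilator identity converts the constraints furnished by weak torsion-freeness into explicit tensor expressions, and the one-dimensionality of the socle should ultimately force the alleged element to vanish.

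The principal obstacle I anticipate is the gap between the single-element content of weak torsion-freeness (which controls relations $\lambda m = 0$) and the multi-element content of flatness (which controls relations $\sum x_i m_i = 0$). Lemma~\ref{lem:wtf} above is precisely the engine for such conversions, but its hypothesis~(ii) is itself a multi-element condition and cannot be invoked naively. If direct bootstrapping proves too delicate, an alternative route is to Matlis-dualize: over Gorenstein Artin local $R$, the functor $(-)^\vee = \mathrm{Hom}_R(-, R)$ is an exact anti-equivalence under which flatness (equivalently freeness) corresponds to injectivity; Baer's criterion for $M^\vee$, combined with the Frobenius pairing identifying $M^\vee$ with $\mathrm{Hom}_\kappa(M,\kappa)$, should then translate into a condition on $M$ that the weak torsion-freeness hypothesis suffices to verify.
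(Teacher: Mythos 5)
The paper does not prove this proposition; it cites it from~\cite{Brochard_Mezard_ConjDeSmit}, so there is nothing internal to compare against, and I can only assess the proposal on its own terms. Your forward direction is complete and correct (and, as a side remark, uses neither the Gorenstein hypothesis nor the field hypothesis). Your converse direction, however, is only a plan: you reduce correctly to showing $M$ is free, equivalently $\Tor_1^R(\kappa,M)=0$, but the ``socle filtration plus double-annihilator'' bootstrap is never carried out, and the Matlis-duality alternative is only gestured at---you acknowledge as much. That is a genuine gap, and it is not obviously closable along the lines you describe.

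There is a short way to close it that sidesteps the single-relation vs.\ multi-relation worry you raise, working with a free presentation rather than with $\mgo_R\otimes_R M\to M$. Lift a $\kappa$-basis of $M/\mgo_R M$ to a surjection $\pi : F=R^{(I)}\to M$ (surjectivity holds because $\mgo_R$ is nilpotent, with no finiteness needed), with kernel $K\subset\mgo_R F$. If $K\neq 0$ then, again because $\mgo_R$ is nilpotent, $K$ meets $\mathrm{soc}(F)=\mathrm{soc}(R)^{(I)}$ nontrivially. Since $R$ is Gorenstein Artin, $\mathrm{soc}(R)=\sigma R$ for a single socle generator $\sigma$, so any nonzero $v\in K\cap\mathrm{soc}(F)$ can be written $v=\sigma w$ with $w\in F\setminus\mgo_R F$. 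Then $\sigma\,\pi(w)=\pi(v)=0$, and weak torsion-freeness (with $\sigma\neq 0$) gives $\pi(w)\in\mgo_R M$; but $\pi$ is an isomorphism modulo $\mgo_R$, so $w\in\mgo_R F$, a contradiction. Hence $K=0$ and $M$ is free. The Gorenstein hypothesis is used exactly to pin the socle down to a \emph{single} generator $\sigma$, so that one application of weak torsion-freeness suffices; the coefficient-field hypothesis plays no role in this argument, though it is of course satisfied in the application, where $R=B/\mgo_A B$ contains $\kappa(A)$.
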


\begin{proof}[{Proof of Theorem~\ref{thm_desmit}}]
 Let $M$ be a nonzero $A$-flat $B$-module. We will apply Lemma~\ref{lem:wtf} 
with $(x_1, \dots, x_n)$ the image in $B$ of a minimal system of generators of 
$\mgo_A$, and $(u_1, \dots, u_n)$ a system of generators of~$\mgo_B$, so that 
$J_x=\mgo_AB$ and $J_u=\mgo_B$. The assumption (i) holds because the 
morphism~$A\to B$ is local, and the assumption (ii) holds by 
\cite[4.2 and 4.3]{Brochard_Mezard_ConjDeSmit} because $M$ is flat over~$A$. 
Then $\Delta \notin \mgo_A B$, 
otherwise we would have $\Delta M\subset \mgo_A M$, which would imply 
that $M=\mgo_BM$ by Lemma~\ref{lem:wtf}, hence $M=0$, which is a contradiction. 
In particular, for any choice of the matrix $W$ and the generators~$u_1, \dots, 
u_n$ of~$\mgo_B$ such that $x=Wu$, we have $\det W \neq 0$. This proves that 
$\edim(B)=\edim(A)$, because if $\edim(B)<\edim(A)$ we can choose $W$ with a 
zero column. Moreover, in the ring $B/\mgo_A B$, the matrix $\ov{W}$ 
satisfies $\ov{W}u=0$ and 
$\det \ov{W}\neq 0$. In the language of~\cite[Definition 
2.6]{Simon_Strooker_Wiebe},
this means that $\ov{W}$ is (the transpose of) a $u$-Wiebe matrix for the ring 
$B/\mgo_AB$. By~\cite[2.7]{Simon_Strooker_Wiebe}, this implies that the ring 
$B/\mgo_AB$ is a complete intersection of dimension zero, and proves (2).

Let us prove that $M$ is $B$-flat. By~\cite[2.3]{Brochard_Mezard_ConjDeSmit}, 
it suffices to prove that $M/\mgo_AM$ is flat over $B/\mgo_AB$.
We have seen that~$B/\mgo_AB$ is a complete intersection of dimension zero and 
hence a Gorenstein Artin local ring. Moreover, it contains the 
field~$\kappa(A)$. Hence, by Proposition~\ref{prop:wtf_equiv_plat}, it
suffices to prove that $M/\mgo_AM$ is weakly torsion-free over $B/\mgo_AB$, 
i.e. that for any 
$\lambda\in B$ and any $m\in M$, the relation $\lambda m\in \mgo_AM$ 
implies that $\lambda\in \mgo_AB$ or $m\in \mgo_BM$. So, let $\lambda\in B$ and 
$m\in M$ be such that $\lambda m\in \mgo_AM$ and $\lambda\notin \mgo_AB$.
Let us prove that $m\in \mgo_BM$. By~\cite[2.7]{Simon_Strooker_Wiebe}, we know 
that the image $\ov{\Delta}$ of~$\Delta$ is a generator of the socle of the 
Gorenstein ring $B/\mgo_AB$. Hence, $(\ov{\Delta}) \subset (\ov{\lambda})$ in 
$B/\mgo_AB$. Since $\lambda m\in \mgo_A M$, this implies that $\Delta 
m\in\mgo_AM$. By Lemma~\ref{lem:wtf}, we get that $m\in \mgo_BM$, as 
required.

Lastly let us prove that $A\to B$ is flat. Let $E\to F$ be an injection of 
$A$-modules. Let $K$ 
be the kernel of $E\otimes_A B\to F\otimes_AB$. Since $M$ is $B$-flat, 
$K\otimes_B M$ is the kernel of $E\otimes_AM\to F\otimes_AM$, which is zero 
because $M$ is $A$-flat. But $M$ is faithfully flat over $B$ (because it is 
free and nonzero) and hence $K=0$.
\end{proof}

\begin{remarque}\rm
If we replace the assumption (iii) of Theorem~\ref{thm_desmit} with ``$A$ and 
$B$ are 
Artin or $M$ is of finite type over $A$'', then we do not need to assume that 
the morphism $\varphi : A\to B$ is local: it is then a consequence of the other 
hypotheses. Indeed, if there is an element $a\in \mgo_A$ such that 
$\varphi(a)\in B^{\times}$, then $M=\varphi(a)M$ and hence~$M=\mgo_A M$. Then 
$M=0$ by Nakayama's lemma (or because $\mgo_A$ is nilpotent if $A$ 
is Artin), which is a con\-tra\-dic\-tion. On the other hand, if $A$ is Artin, 
I am not sure that we really need the assumption that $B$ is Artin too.
\end{remarque}
\details{
Si on ne suppose pas $A\to B$ local, il ne suffit pas de supposer que $M$ est 
de type fini sur $B$. En effet, on a un contre-exemple en prenant $\Z_p\to 
\Q_p$ et $M=\Q_p$. Le morphisme n'est pas local, les dimensions de plongement 
ne sont pas égales.
}

The following is a global version of Theorem~\ref{thm_desmit}.

\begin{thm}
\label{thm:global_version}
 Let $f : X\to S$ be a morphism of locally Noetherian schemes. Assume that for 
any $x\in X$, the dimensions of the tangent spaces $T_x$ and $T_{f(x)}$ satisfy
\[
 \dim T_x \leq \dim T_{f(x)}
\]
Let $\Mc$ be a coherent $\Oc_X$-module, flat over $S$. Then:
\begin{enumerate}[label=(\arabic*)]
 \item $\Mc$ is flat over $X$.
\item For each point $x\in \Supp(\Mc)$, $f$ is flat at $x$, the fiber 
$f^{-1}(f(x))$ is a complete intersection of dimension zero at $x$ (i.e. its 
local ring at $x$ is a complete intersection of dimension zero) and $\dim 
T_x=\dim T_{f(x)}$.
\details{i.e. l'anneau local de la fibre en $x$ est une IC, cf déf EGA IV 
19.3.1.}%
\end{enumerate}
In particular, if there exists a coherent $\Oc_X$-module $\Mc$ flat over $S$ 
whose support is $X$, then $f$ is a complete intersection morphism of relative 
dimension zero (i.e. $f$ is flat and its fibers are zero-dimensional complete 
intersections).
\end{thm}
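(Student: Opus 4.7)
The plan is to deduce Theorem~\ref{thm:global_version} from Theorem~\ref{thm_desmit} by a straightforward pointwise argument. The key translation is that for a point $x$ on a locally Noetherian scheme, the tangent space $T_x$ has $\kappa(x)$-dimension equal to the embedding dimension $\edim(\Oc_{X,x}) = \dim_{\kappa(x)} \mgo_x/\mgo_x^2$, so the hypothesis on tangent spaces becomes exactly an inequality of embedding dimensions of local rings.

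Fix $x\in X$, set $y = f(x)$, and consider the local homomorphism $A = \Oc_{S,y} \to B = \Oc_{X,x}$ induced by $f$, together with the $B$-module $M = \Mc_x$. Since $\Mc$ is coherent, $M$ is of finite type over $B$, so hypothesis (iii) of Theorem~\ref{thm_desmit} is automatic; hypothesis (i) is precisely the tangent-space assumption after the translation above, and hypothesis (ii) amounts to the condition $x \in \Supp(\Mc)$. For such $x$, Theorem~\ref{thm_desmit} yields at once the flatness of $A\to B$ and of $M$ over $B$, the equality $\edim(B) = \edim(A)$ (i.e.\ $\dim T_x = \dim T_{f(x)}$), and the fact that $B/\mgo_A B$ — which is the local ring of the scheme-theoretic fiber $f^{-1}(y)$ at $x$ — is a zero-dimensional complete intersection. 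This gives (2) and the flatness of $\Mc$ at every point of $\Supp(\Mc)$; for $x \notin \Supp(\Mc)$ the stalk $\Mc_x = 0$ is trivially flat, so (1) holds at every point of $X$.

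The ``In particular'' clause is then immediate: if $\Supp(\Mc) = X$, statement (2) applies at every $x \in X$, so $f$ is flat everywhere and every fiber is a zero-dimensional complete intersection, which means $f$ is a complete intersection morphism of relative dimension zero. There is no real obstacle in this argument: the identification $\dim T_x = \edim(\Oc_{X,x})$ and the recognition of $B/\mgo_A B$ as the stalk of the fiber at $x$ are both standard, and all of the work has already been done in Theorem~\ref{thm_desmit}.
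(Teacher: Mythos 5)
Your proposal is correct and follows essentially the same route as the paper: a pointwise reduction to Theorem~\ref{thm_desmit} via the identifications $\dim T_x=\edim(\Oc_{X,x})$ and $B/\mgo_A B \cong \Oc_{f^{-1}(y),x}$. The only detail the paper spells out that you leave implicit is that flatness of $\Mc$ over $S$ gives flatness of the stalk $\Mc_x$ over the local ring $\Oc_{S,f(x)}$ (the paper cites Bourbaki for this), but that is standard and does not affect correctness.
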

\begin{proof}
 Note that the local ring of $X$ at a point $x$ is unaltered through the base 
change along $\Spec\Oc_{S,f(x)} \to S$. By \cite[Ch. II, \S3, no 4, Prop. 
14]{Bourbaki_Algebre_Commutative1-4}, the module $\Mc_x$ is flat over 
$\Oc_{S,f(x)}$. Since $\dim T_x=\edim(\Oc_{X,x})$ and 
$\dim(T_{f(x)})=\edim(\Oc_{S, f(x)})$, we can apply Theorem~\ref{thm_desmit} 
and 
$\Mc_x$ is flat over $\Oc_{X,x}$. This proves~(1). If $x\in \Supp(\Mc)$ 
then~$\Mc_x\neq 0$ and by Theorem~\ref{thm_desmit} we deduce 
that $\Oc_{X,x}$ is flat over $\Oc_{S, f(x)}$, that these rings have the same 
embedding dimension and that 
$\Oc_{X,x}/\mgo\Oc_{X,x}$ is a zero-dimensional complete intersection (where 
$\mgo$ is the maximal ideal of~$\Oc_{S, 
f(x)}$). Since the latter ring $\Oc_{X,x}/\mgo\Oc_{X,x}$ identifies with the 
local ring at $x$ of the fiber $X_{f(x)}$, this proves (2).
\end{proof}

 \section{Application to the proof of Fermat's Last Theorem}
\label{section:preuve_flt}

To conclude his proof, Wiles had to prove that a certain morphism of 
$\Oc$-algebras $\Phi_{\Sigma} : R_{\Sigma} \to T_{\Sigma}$ is an isomorphism 
and that $T_{\Sigma}$ is a complete intersection over~$\Oc$. Here $\Oc$ is the 
ring of integers of a finite extension of $\Q_{\ell}$, $R_{\Sigma}$ is the 
universal deformation ring for a Galois representation $\ov{\rho} : 
\Gal(\ov{\Q}/\Q)\to \GL_2(k)$ (where $k$ is the residue field of $\Oc$) and 
$T_{\Sigma}$ is a certain Hecke algebra. The proof proceeds in two steps: first 
it is proved for $\Sigma=\vide$ (the ``minimal case''), then the general case is 
deduced from this one using some numerical criterion for complete intersections. 
To handle the minimal case, Wiles constructs a system of sets $Q_n$, 
$n\geq 1$, where $Q_n$ is a set of primes congruent to 1 mod $\ell^n$ with some 
other ad hoc properties. Then, by considering a kind of ``patching'' of the 
morphisms $\Phi_{Q_n}  : R_{Q_n} \to T_{Q_n}$, Wiles manages to prove the 
desired result for $\Sigma=\vide$. Among other things, the proof relied on the 
fact (due to Mazur and Ribet) that the homology of the modular curve is a free 
module of rank 2 over $T_{\vide}$ (known as a ``multiplicity one'' result). 
Diamond improved the method in~\cite{Diamond_The_Taylor_Wiles}: by patching the 
modules as well as the algebras, he managed to remove multiplicity one as an 
ingredient of the proof of Fermat's Last Theorem. His proof relies on his 
freeness criterion~\cite[2.1]{Diamond_The_Taylor_Wiles}. Since we only want 
to illustrate how our Theorem~\ref{thm_desmit} can be used to avoid the 
patching method and the use of Taylor-Wiles systems, we will work in the rather 
restrictive setting of~\cite{Darmon_Diamond_Taylor} 
and~\cite{Diamond_The_Taylor_Wiles}. We use their notations and statements in 
the sequel.

Let us prove that $\Phi_{\vide}$ is an isomorphism. Let $\lambda$ be a 
uniformizer of $\Oc$. By~\cite[2.49]{Darmon_Diamond_Taylor} (note that we only 
use it for $n=1$, so here also the proof can be simplified), there 
exists a finite set of prime numbers $Q$ such that:
\begin{itemize}
 \item $\edim(\ov{R_{Q}})\leq \# Q =:r$, \textrm{ where } 
$\ov{R_{Q}}=R_{Q}\otimes_{\Oc}k=R_Q/\lambda R_Q$
\item For any $q\in Q$, $q\equiv 1\ (\ell)$, $\ov{\rho}$ is unramified at $q$ 
and $\ov{\rho}(\Frob_q)$ has distinct eigenvalues.
\end{itemize}
Let $G$ be the $\ell$-Sylow subgroup of $\prod_{q\in Q} (\Z/q\Z)^{\times}$. 
We endow $R_Q$ with the structure 
of an $\Oc[G]$-algebra as in~\cite[\S 2.8]{Darmon_Diamond_Taylor}. Let $N$ 
be the integer $N_{\vide}$ of \cite[(4.2.1)]{Darmon_Diamond_Taylor} and 
$M=p^2\prod_{q\in Q}q$, where $p$ is a well-chosen auxiliary prime 
(see~\cite[\S4.3]{Darmon_Diamond_Taylor}). Consider the group
\[
 \Gamma=\Gamma_0(N)\cap \Gamma_1(M)
\]
and let $X_{\Gamma}$ be the associated modular curve. 
By~\cite[4.10]{Darmon_Diamond_Taylor}, there is an isomorphism $T_Q \to 
T'_{\mgo}$, where~$T'$ is the algebra of Hecke operators acting on 
$H^1(X_{\Gamma},\Oc)$ and $\mgo$ is a certain maximal ideal of~$T'$. Consider 
the $T'_{\mgo}$-module
\[
 H:=H^1(X_{\Gamma},\Oc)_{\mgo}^-
\]
of elements on which complex conjugation acts by -1. We can view $H$ as a 
module over $\Oc[G]$, $R_Q$, or $T_Q$ via the $\Oc$-algebra morphisms
\[
 \Oc[G]\to R_Q \to T_Q \to T'_{\mgo}\, .
\]
\begin{thm}
\label{thm:isom_ic}
 With the above notation:
\begin{enumerate}
 \item $H$ is free over $T_Q$,
\item $T_Q$ is free over $\Oc[G]$,
\item $T_Q$ is a relative complete intersection over $\Oc[G]$ (hence also over 
$\Oc$),
\item $\Phi_Q : R_Q \to T_Q$ is an isomorphism.
\end{enumerate}
\end{thm}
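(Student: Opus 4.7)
The plan is to reduce the entire theorem to two applications of Theorem~\ref{thm_desmit}, taking as the only input from the theory of modular forms the fact that $H$ is flat over $\Oc[G]$. This ``diamond operator'' flatness is standard in the Taylor--Wiles context: it comes from the local analysis at the auxiliary primes $q\in Q$ (where $\ov{\rho}$ is unramified and $\ov{\rho}(\Frob_q)$ has distinct eigenvalues) and is available from~\cite{Darmon_Diamond_Taylor}. Crucially, only the single level $Q$ is needed; no patching is required.

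Granted this input, I would first verify the embedding-dimension inequality. Each factor $(\Z/q\Z)^\times$ with $q\equiv 1 \pmod{\ell}$ contributes exactly one cyclic $\ell$-summand to $G$, so $G$ is a finite abelian $\ell$-group of $\ell$-rank $r=\# Q$; a minimal system of generators of $\mgo_{\Oc[G]}$ is then $\lambda,\,g_1-1,\dots,g_r-1$, whence $\edim(\Oc[G])=r+1$. Lifting a minimal generating set of $\mgo_{\ov{R_Q}}$ to $R_Q$ and adjoining $\lambda$ shows $\edim(R_Q)\leq \edim(\ov{R_Q})+1\leq r+1$, and $\edim(T_Q)\leq \edim(R_Q)$ because $\Phi_Q$ is surjective by construction. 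The module $H$ is finitely generated over $\Oc$, hence over $R_Q$ and $T_Q$, and is nonzero, so Theorem~\ref{thm_desmit} applies to both local morphisms $\Oc[G]\to R_Q$ and $\Oc[G]\to T_Q$ with module~$H$.

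Each application gives: the target is flat over $\Oc[G]$, has embedding dimension $r+1$, its quotient by $\mgo_{\Oc[G]}$ is a zero-dimensional complete intersection, and $H$ is flat over that target. The $T_Q$-case immediately yields~(3); and since $H$ is finitely generated and flat over the local ring $T_Q$, it is free, proving~(1). For~(2), note that $T_Q$ is finitely generated as an $\Oc$-module (a Hecke algebra), hence over $\Oc[G]$, so its $\Oc[G]$-flatness upgrades to freeness. For~(4), set $I=\ker\Phi_Q$; the $R_Q$-action on $H$ factors through $\Phi_Q$, so $IH=0$, and tensoring $0\to I\to R_Q\to T_Q\to 0$ with the $R_Q$-flat module $H$ yields $I\otimes_{R_Q}H=0$. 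Because $H$ is a nonzero free $T_Q$-module and $\Phi_Q$ is local, $H/\mgo_{R_Q}H=H/\mgo_{T_Q}H\neq 0$, so $H$ is faithfully flat over the local ring $R_Q$, and consequently $I=0$.

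The main---and really the only non-formal---obstacle is the initial claim that $H$ is flat over $\Oc[G]$; everything else reduces to bookkeeping with embedding dimensions and faithful flatness. Once this input is granted, Theorem~\ref{thm_desmit} entirely replaces the Taylor--Wiles--Diamond patching construction, converting what used to be an inductive argument over $n$ into a single embedding-dimension inequality at the level $n=1$.
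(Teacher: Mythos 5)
Your proposal is correct, and it reaches the four conclusions by a genuinely different (and arguably cleaner) route than the paper. The paper reduces mod $\lambda$ first: it invokes Diamond's Lemma~3.2 to get that $\ov H=H/\lambda H$ is free over $k[G]$, applies Theorem~\ref{thm_desmit} to the \emph{Artin} rings $k[G]\to \ov{R_Q}$ (using $\edim(k[G])=r\geq\edim(\ov{R_Q})$), deduces the freeness and complete-intersection statements after $\otimes_\Oc k$ together with the injectivity of $\ov{R_Q}\to\ov{T_Q}$, and then lifts everything back to $\Oc$ via the Nakayama-type Lemma~\ref{rem:appli_nakayama}, using that $T_Q$ is $\Oc$-free. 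You instead work integrally: you take as input that $H$ is flat over $\Oc[G]$, check $\edim(\Oc[G])=r+1\geq \edim(R_Q)\geq\edim(T_Q)$, and apply Theorem~\ref{thm_desmit} (in its Noetherian, finite-module form) directly to $\Oc[G]\to R_Q$ and $\Oc[G]\to T_Q$, getting (1)--(3) at once and (4) by faithful flatness of the free $R_Q$-module $H$; this dispenses with Lemma~\ref{rem:appli_nakayama} entirely. The trade-off is in the input: the reference actually supplies the mod-$\lambda$ statement ($\ov H$ free over $k[G]$), so your ``standard'' flatness of $H$ over $\Oc[G]$ itself requires the one-line Nakayama upgrade using that $H$ is $\Oc$-free of finite rank --- i.e., you have not eliminated the descent step so much as relocated it to the hypotheses. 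Your bookkeeping is sound ($\edim(\Oc[G])=r+1$ because the relations $(1+S_i)^{\alpha_i}-1$ lie in $\mgo^2$ as $\ell\mid\alpha_i$; $\edim(R_Q)\leq\edim(\ov{R_Q})+1$; surjectivity of $\Phi_Q$ both for the $\edim$ inequality and to convert injectivity into (4)), and your faithful-flatness argument for (4) makes precise the paper's terser ``otherwise $\ov H$ would have torsion over $\ov{R_Q}$''. The second application of Theorem~\ref{thm_desmit} to $\Oc[G]\to T_Q$ is logically redundant once (4) is known, but harmless.
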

\begin{proof}
 By~\cite[Lemma~3.2]{Diamond_The_Taylor_Wiles}, $\ov{H}=H/\lambda H$ is free 
over $k[G]$. Note that there is an isomorphism $\Oc[G]\simeq \frac{\Oc[[S_1, 
\dots, S_r]]}{((1+S_1)^{\alpha_1}-1, \dots,(1+S_r)^{\alpha_r}-1)}$, where the 
$\alpha_i$ are the cardinalities of the $l$-Sylow subgroups of the 
$(\Z/q\Z)^{\times}$ for $q\in Q$. In particular, $\alpha_i\geq \ell\geq 2$ and 
it follows that $\edim(k[G])=r$. We can apply Theorem~\ref{thm_desmit} to the 
morphism $k[G]\to \ov{R_Q}$ and the module $\ov{H}$. We get that $\ov{H}$ is 
free over~$\ov{R_Q}$, and that $\ov{R_Q}$ is free and is a relative complete 
intersection over $k[G]$. In particular, $\ov{R_Q}\to \ov{T_Q}$ must be 
injective (otherwise $\ov{H}$ would have torsion over $\ov{R_Q}$). Since 
it is already known to be surjective, it is an isomorphism. We have proved all 
the statements after $\otimes_{\Oc}k$. Since $T_Q$ is free over $\Oc$, the 
theorem now follows from Lemma~\ref{rem:appli_nakayama} below.
\end{proof}

\begin{lem}
\label{rem:appli_nakayama} Let $\Oc$ be a Noetherian 
local ring with maximal ideal $\mgo$ and residue field $k$.
\begin{enumerate}
 \item Let $f : M\to N$ be a morphism of finite-type $\Oc$-modules. Assume that 
$N$ is free over $\Oc$ and that $f\otimes_{\Oc} k$ is an isomorphism. Then $f$ 
is an isomorphism.
\item Let $A$ be a finite $\Oc$-algebra and $M$ be an $A$-module which is free 
of finite rank over $\Oc$. If $M\otimes_{\Oc}k$ is free over $A\otimes_{\Oc}k$, 
then $M$ is free over $A$.
\item Assume that $\Oc$ is complete. Let $A\to B$ be a morphism of finite local 
$\Oc$-algebras, with $B$ free over $\Oc$. If $B\otimes_{\Oc}k$ is a relative 
complete intersection over $A\otimes_{\Oc}k$, then $B$ is a relative complete 
intersection over $A$ (i.e. the ring $B/\mgo_A B$ is a complete intersection).
\end{enumerate}
\end{lem}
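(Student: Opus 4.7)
The plan is to handle part (1) first by a Nakayama-type argument, and then deduce (2) and (3) as essentially formal consequences.

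For (1), I would first establish surjectivity: because $\Coker(f)$ is a finite-type $\Oc$-module with $\Coker(f)\otimes_\Oc k=0$, Nakayama's lemma gives $\Coker(f)=0$. Setting $K=\Ker(f)$, the $\Oc$-flatness of $N$ ensures that the short exact sequence $0\to K\to M\to N\to 0$ remains exact after tensoring with $k$, so $K/\mgo K$ injects into $M/\mgo M$ and then maps to zero in $N/\mgo N$. Since $f\otimes_\Oc k$ is injective, $K/\mgo K=0$. As $K$ is a submodule of the Noetherian $\Oc$-module $M$, it is itself finite-type over $\Oc$, so a second application of Nakayama gives $K=0$.

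For (2), I would pick elements $m_1,\dots,m_s\in M$ whose images form an $(A\otimes_\Oc k)$-basis of $M\otimes_\Oc k$, and consider the $A$-linear morphism $\varphi:A^s\to M$ sending $(a_1,\dots,a_s)$ to $\sum a_im_i$. Both $A^s$ and $M$ are finite-type as $\Oc$-modules, $M$ is $\Oc$-free by hypothesis, and $\varphi\otimes_\Oc k$ is an isomorphism by construction; part (1) applied to $\varphi$ then yields that $\varphi$ is an isomorphism of $A$-modules.

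For (3), the key observation is that reduction mod $\mgo$ does not change the fiber relevant to the definition of a relative complete intersection given in the paper: since $\Oc\to A$ is local we have $\mgo B\subset\mgo_A B$, and hence a canonical isomorphism of rings
\[
\frac{B\otimes_\Oc k}{\mgo_{A\otimes_\Oc k}\,(B\otimes_\Oc k)}\;\cong\;\frac{B}{\mgo_A B}.
\]
The hypothesis that $B\otimes_\Oc k$ is a relative complete intersection over $A\otimes_\Oc k$ is therefore literally the statement that $B/\mgo_A B$ is a complete intersection, which is the desired conclusion; the hypothesis that $\Oc$ is complete does not actually intervene in this step. I do not expect any real obstacle: part (1) is the standard Nakayama descent of an isomorphism with flat target, and (2) and (3) reduce to it formally.
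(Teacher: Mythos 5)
Your parts (1) and (2) match the paper's argument essentially verbatim: surjectivity of $f$ via Nakayama on the cokernel, then flatness of $N$ to preserve exactness and kill the kernel; and for (2), lifting a basis of $M\otimes_\Oc k$ to get a map $A^s\to M$ to which (1) applies. Nothing to add there.

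For (3) your route is genuinely different from the paper's, and your isomorphism
\[
\frac{B\otimes_\Oc k}{\mgo_{A\otimes_\Oc k}\,(B\otimes_\Oc k)}\;\cong\;\frac{B}{\mgo_A B}
\]
is correct: since $\Oc\to A$ is automatically local here (it is a finite morphism with $A$ local, so $\mgo_A\cap\Oc=\mgo$), one has $\mgo B\subset\mgo_A B$ and both sides equal $B/(\mgo_A B+\mgo B)=B/\mgo_A B$. Taken with the Notations-section definition of ``relative complete intersection'' (a purely fiberwise condition), the hypothesis and conclusion of (3) are thus literally the same statement, and you are right that completeness of $\Oc$ is not needed for the result \emph{as stated}. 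The paper, by contrast, proves (3) by lifting a presentation $(A\otimes_\Oc k)[[X_1,\dots,X_n]]/(f_1,\dots,f_n)\simeq B\otimes_\Oc k$ to a presentation $A[[X_1,\dots,X_n]]/(g_1,\dots,g_n)\simeq B$ via a Nakayama argument as in (1); this is where completeness enters (to have a power-series algebra to map from). That argument establishes something strictly stronger than the stated conclusion, namely that $B$ itself admits such a presentation over $A$, which is a more robust notion of relative complete intersection (the two notions differ in general, e.g.\ for $k[\eps]/\eps^2\to k$, though they coincide for flat maps, which is what occurs in the application). So your proof is correct for the lemma as written and cheaper; the paper's proof is doing extra work whose output is only partly reflected in the statement, and which is where the completeness hypothesis is actually used.
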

\begin{proof}
 These are standard consequences of Nakayama's Lemma.
\end{proof}

\details{
\begin{proof}
 (1) Let $C$ denote the cokernel of $f$. It is of finite type over $\Oc$ 
because $N$ is of finite type. By assumption, $C/\mgo C=0$, hence by Nakayama's 
Lemma, $C=0$. Now let $K$ denote the kernel of $f$. We have an exact sequence
\[
 0\lto K\lto M\lto N\lto 0\, .
\]
Since $N$ is flat over $\Oc$ this sequence induces a short exact sequence:
\[
 0\lto K\otimes_{\Oc} k\lto M\otimes_{\Oc} k\lto N\otimes_{\Oc} k\lto 0\, .
\]
By assumption $f\otimes_{\Oc} k$ is an isomorphism, hence $K=\mgo K$. Moreover 
$K$ is of finite type over $\Oc$ because $M$ is of finite type and $\Oc$ is 
Noetherian. By Nakayama this implies $K=0$.

(2) Let $(m_j)_{j\in J}$ be a finite family of elements of $M$, the images of 
which form a basis of $M/\mgo M$ over $A/\mgo A$. Let $f : A^J\to M$ be the 
$A$-linear morphism that maps the canonical basis of $A^J$ to $(m_j)_{j\in J}$. 
By construction~$f\otimes_{\Oc} k$ is an isomorphism. Now by (1) $f$ is an 
isomorphism hence $M$ is free over~$A$.

(3) By assumption, there exist an integer $n$ and elements $f_1, \dots, f_n\in
(A\otimes_{\Oc}k)[[X_1, \dots, X_n]]$, such that there is an isomorphism
\[
\varphi : \frac{(A\otimes_{\Oc}k)[[X_1, \dots, X_n]]}{(f_1, \dots, f_n)} \lto
B\otimes_{\Oc}k\, .
\]
We can lift $\varphi$ to a homomorphism of $A$-algebras
\[
\psi : A[[X_1, \dots, X_n]] \lto B\, .
\]
We can also lift each $f_i$ to a $g_i\in \Ker \psi$. 
Then $\psi$ induces a homomorphism of $A$-algebras $\ov{\psi}$ from $A[[X_1, 
\dots, X_n]]/(g_1, \dots, g_n)$ to $B$. By construction 
$\ov{\psi}\otimes_{\Oc}k$ coincides with $\varphi$, hence is an isomorphism. 
As in (1), this implies that $\ov{\psi}$ is surjective and that its kernel $K$ 
satisfies $K=\mgo K$. If $\Oc\to A$ is local, we have $\mgo A\subset \mgo_A$ 
hence $K=\mgo_A K$. Since $A$ is Noetherian, the $A$-module $K$ is of finite 
type and by Nakayama's Lemma we have $K=0$. If $\Oc\to A$ is not local, then 
$\mgo A$ contains an invertible element hence so does $\mgo B$. This implies 
$\mgo B=B$, hence $B=0$ by Nakayama. In both cases $B$ is a relative complete 
intersection over $A$.
\end{proof}
}

\begin{remarque}\rm
 By~\cite[Cor. 2.45 and 3.32]{Darmon_Diamond_Taylor}, the canonical 
morphisms $R_Q \to R_{\vide}$ and $T_Q\to T_{\vide}$ induce isomorphisms 
$R_Q/\ago_QR_Q \simeq R_{\vide}$ and $T_Q/\ago_QT_Q \simeq T_{\vide}$, where 
$\ago_Q$ is the augmentation ideal of $\Oc[G]$ (i.e. the ideal generated by 
$S_1, \dots, S_R$). Hence, from Theorem~\ref{thm:isom_ic}, it follows 
immediately that 
$\Phi_{\vide} : R_{\vide} \to T_{\vide}$ is an isomorphism and $T_{\vide}$ 
is a complete intersection over $\Oc$.
\end{remarque}

\subsection*{Acknowledgments.} I warmly thank Bart de Smit for raising his 
conjecture and for pointing out to me the freeness criterion~\cite[Thm 
2.1]{Diamond_The_Taylor_Wiles}. I also thank Michel Raynaud and Stefano Morra 
for interesting conversations, and the anonymous referee for valuable comments 
that helped to clarify the exposition.

\bibliographystyle{plain}
\bibliography{mabiblio}
\end{document}